\newtheorem{theorem}{Theorem}[section]
\newtheorem{lem}[theorem]{Lemma}
\newtheorem{prop}[theorem]{Proposition}
\newtheorem{cor}[theorem]{Corollary}
\newtheorem{thm}[theorem]{Theorem}
\theoremstyle{definition}
\newtheorem{nota}[theorem]{Notation}
\newtheorem{example}[theorem]{Example}
\newtheorem{remark}[theorem]{Remark}
\newtheorem{Setting}[theorem]{Setting}
\numberwithin{equation}{section}
\newtheorem{remark/questions}[theorem]{Remark and Questions}
\newtheorem{fact}[theorem]{Fact}
\newcommand{\N}{\mathbb{N}}
 \long\def\alert#1{\smallskip\line{\hskip\parindent\vrule
\vbox{\advance\hsize-2\parindent\hrule\smallskip\parindent.4\parindent
  \narrower\noindent#1\smallskip\hrule}\vrule\hfill}\smallskip}
\newtheorem{abf}[theorem]{Auslander-Buchsbaum Formula}
\newcommand{\m}{\mathfrak{m}}
\newcommand{\n}{\mathfrak{n}}
\DeclareMathOperator{\pd}{pd}
\DeclareMathOperator{\Tor}{Tor}
\DeclareMathOperator{\tor}{Tor}
\DeclareMathOperator{\depth}{depth}
\def\dim{\mathop{\rm dim}}
 \def \N{{\mathbb  N}}
\numberwithin{equation}{section}
\begin{document}

%\title{On generalized local cohomology, canonical modules and %formal local cohomology modules}

\title{Vanishing of Tor over fiber products}

%\author{Y. Kim$^{1,}$\thanks{}\,\,\,\, \,\,\,V. H. Jorge P\'erez$^{2,}\,$

%\date{today}

\author{T. H. Freitas}
\address{Universidade Tecnol\'ogica Federal do Paran\'a, 85053-525, Guarapuava-PR, Brazil}
\email{freitas.thf@gmail.com}

\author{V. H. Jorge P\'erez}
\address{Universidade de S{\~a}o Paulo -
ICMC, Caixa Postal 668, 13560-970, S{\~a}o Carlos-SP, Brazil}
\email{vhjperez@icmc.usp.br}

\author{R. Wiegand}
\address{University of Nebraska-Lincoln}
\email{rwiegand@unl.edu }

\author{S. Wiegand}
\address{University of Nebraska-Lincoln}
\email{swiegand1@unl.edu}

\date{August 27, 2019}
\thanks{All four authors were partially supported by FAPESP-Brazil  2018/05271-6,   
2018/05268-5 and CNPq-Brazil 421440/2016-3.
RW was partially supported by Simons Collaboration Grant 426885. SW was partially 
supported by a UNL Emeriti \&
Retiree Association Wisherd Award}

\keywords{fiber product, Tor}
\subjclass[2010]{13H15}

\begin{abstract}
Let $(S,\m,k)$ and $(T,\n,k)$ be local rings, and let $R$ denote their fiber
 product over their common residue field $k$.  Inspired by work of Naseh and Sather-Wagstaff, we explore consequences of
 vanishing of $\Tor^R_m(M,N)$ for various values of $m$, where $M$ and $N$ are
  finitely generated $R$-modules.
\end{abstract}

\maketitle

\section{Introduction}  Recently there has been renewed interest in the homological properties of 
 fiber product rings.
In particular, the results obtained by Nasseh and Sather-Wagstaff on the vanishing of Tor  in \cite{NS} have  inspired
us to  try to extend  their computations.   This note should be regarded as an addendum to that paper, or perhaps an advertisement for the utility of the nice results established there.

\begin{Setting} \label{fpset} Let $(S,\mathfrak{m},k)$ and $(T,\mathfrak{n},k)$
be commutative local rings.
Let $S \stackrel{\pi_S}\twoheadrightarrow k \stackrel{\pi_T}\twoheadleftarrow T$
 denote the natural surjections onto the common residue field,
and assume that $S\neq k\neq T$. Let $R$ denote the fiber product:

$$
R:= S \times_k T=\{(s,t)\in S\times T  \ \mid \ \pi_S(s)=\pi_T(t)  \}.
$$
Then $R$ is a local ring with maximal 
ideal $\m\times\n$, and $R$ is a
subring of the usual direct product $S\times T$. Let $\eta_S:R\twoheadrightarrow S$
and $\eta_T:R\twoheadrightarrow T$ be the
projections $(s,t)\mapsto s$ and $(s,t)\mapsto t$, respectively.
The maps $\eta_S$ and $\eta_T$ are surjective, with respective
kernels $J:= 0\times\n$ and $I:= \m\times0$.
Then $R$ is represented as a pullback diagram:

\begin{equation*}
\begin{CD}
R & \stackrel{\eta_S}{\longrightarrow} & S\\
{\eta_T}{\downarrow} &  & {\downarrow}{\pi_S} \\
T & \stackrel{\pi_T}{\longrightarrow} & k
\end{CD}
\tag{\ref{fpset}.1}
\end{equation*}

\
\

\noindent The maximal ideal $\m\times \n$ is decomposable: $\m\times \n = I\oplus J$.
For future reference we note that
\begin{equation*}\label{eq:iso}
\begin{gathered}
I\cong \m \quad\text{and}\quad J\cong\n \quad\text{as
$R$-modules}\,;  \quad\text{also}\\
%\text{Also,}
\ S\cong R/J \quad\text{and}\quad T\cong R/I\,\quad\text{as rings and as
$R$-modules.}\,
\end{gathered}\tag{\ref{fpset}.2}
\end{equation*}

Whenever we use the symbols $R$, $S$, or $T$, we tacitly assume that $R$ is the fiber product
as described here; the notation $\m$, $\n$, $I$, and $J$ for the various ideals will be preserved
throughout the paper.
Moreover, every module over $R$, $S$, or $T$ is assumed to be finitely generated.
\end{Setting}

        The general theme of the paper is to assume the vanishing of $\Tor_m^R(M,N)$ for
        certain $R$-modules $M$ and $N$, and
        certain values of $m$, and then describe properties of the modules that result from
        this assumption.

 \begin{nota} \label{syzbnot}    For a local ring $(A,\m_A)$ and a finitely generated
$A$-module $M$, we let $\Omega_A^iM$ denote the $i^{\text{th}}$ syzygy of
$M$ with respect to a minimal $A$-free resolution. We often write $\Omega_AM$ for $\Omega^1_AM$.  
The $i^{\text{th}}$ Betti number $\beta^A_iM$ is the minimal number of generators 
required for the $A$-module $\Omega^i_AM$.

For an  $A$-module $L$ and a non-negative integer $c$, we denote by $L^{\oplus c}$ the direct sum of\ \ $c$\  \ copies of $L$.  
(When $L$ is an ideal of a ring $A$, this distinguishes the direct sum from the ideal product $L^c = L\cdots L$.)
\end{nota}
First we focus on  vanishing of $\Tor^R_m(M,N)$, where $M$ and $N$ are $S$-modules or $T$-modules, using  two useful lemmas. Lemma~\ref{lem:syz} is due to  Nasseh and Takahashi, and Lemma~\ref{lem:tor1} is from the paper \cite{NS} by Nasseh and Sather-Wagstaff and consists of the first and sixth of eight formulas 
given in \cite[Lemma 2.3]{NS}.
  \medskip

        \begin{lem}\label{lem:syz} $\phantom{x}$ \cite[Lemma 3.2]{NT} Let $Y$ be an $S$-module, and set $b=\beta_0^SY$.  Then  
        $$\Omega_RY \cong J^{\oplus b}\oplus \Omega_SY\cong \n^{\oplus b}\oplus \Omega_SY.$$
        \end{lem}

\begin{lem}\label{lem:tor1} \cite[Lemma 2.4]{NS}. Let $X$ and $Y$  be  $S$-modules, and let $Z$ be  a $T$-module. 
\begin{enumerate}
\item 
$\Tor^R_1(X,Y) \cong \Tor_1^S(X,Y)\oplus \left(\frac{Y}{\m Y}\right)^{\oplus(\beta_1^Tk)(\beta_0^SX)}$. 
\item
$\Tor^R_1(Y,Z) \cong \Tor_1^S(Y,k)^{\oplus\beta_0^TZ}\oplus \left(\frac{Y}{\m Y}\right)^{\oplus\beta_1^TZ}$.
\end{enumerate}
\end{lem}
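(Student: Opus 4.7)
The plan is to use Lemma~\ref{lem:syz} to analyze $\Tor_1^R$ directly via a first syzygy. Set $a = \beta_0^S X$ in (1), respectively $a = \beta_0^S Y$ in (2), and consider the short exact sequence $0 \to \Omega_R X \to R^a \to X \to 0$ (respectively with $Y$ in place of $X$) coming from a minimal $R$-free presentation. Tensoring with the second argument and using that $R^a$ is free gives
$$
\Tor_1^R(X,Y) \;=\; \ker\bigl(\Omega_R X \otimes_R Y \to R^a \otimes_R Y\bigr),
$$
and similarly for (2). By Lemma~\ref{lem:syz}, $\Omega_R X \cong J^a \oplus \Omega_S X$ as $R$-modules, and I want the inclusion into $R^a$ to respect this splitting: the $J^a$-summand maps via the canonical inclusion $J \hookrightarrow R$ in each coordinate, and the $\Omega_S X$-summand maps via a lift of the minimal $S$-linear inclusion $\Omega_S X \hookrightarrow \m S^a$ into $IR^a$ (possible since $I \twoheadrightarrow \m$). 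The key step will be to compute the contribution of each summand to the kernel separately.

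For part (1), with $Y$ an $S$-module, the main calculation is $J \otimes_R Y \cong (Y/\m Y)^{\beta_1^T k}$: this follows from $J \otimes_R Y \cong (J \otimes_R S) \otimes_S Y \cong (J/J^2) \otimes_S Y$ together with $J/J^2 \cong \n/\n^2 \cong k^{\beta_1^T k}$. Because $JY = 0$, the induced map $J \otimes_R Y \to Y$ is zero, so the entire $J^a$-summand $(Y/\m Y)^{(\beta_0^S X)(\beta_1^T k)}$ lies in the kernel. On the $\Omega_S X$-summand, the identifications $R^a \otimes_R Y = S^a \otimes_S Y = Y^a$ show that the induced map coincides with the standard map $\Omega_S X \otimes_S Y \to Y^a$, whose kernel is $\Tor_1^S(X,Y)$. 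Summing the two contributions yields (1).

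For part (2), the roles of the two summands swap. Since $Z$ is a $T$-module, $J \otimes_R Z \cong J \otimes_T Z \cong \n \otimes_T Z$, and the induced map $J \otimes_R Z \to Z$ is now the genuine multiplication $\n \otimes_T Z \to Z$, with kernel $\Tor_1^T(k,Z) \cong k^{\beta_1^T Z}$; with $a = \beta_0^S Y$ copies this contributes $(Y/\m Y)^{\beta_1^T Z}$ to the kernel (using $Y/\m Y \cong k^a$). On the $\Omega_S Y$-summand, the choice of lift inside $IR^a$ forces the induced map $\Omega_S Y \otimes_R Z \to Z^a$ to be zero (as $IZ = 0$), so the entire tensor product contributes to the kernel. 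Because $J$ annihilates $\Omega_S Y$ and $I$ annihilates $Z$, this tensor collapses to $(\Omega_S Y/\m\Omega_S Y) \otimes_k (Z/\n Z) \cong \Tor_1^S(Y,k)^{\beta_0^T Z}$, matching the remaining summand of (2).

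The most delicate step is arranging the compatibility between the splitting furnished by Lemma~\ref{lem:syz} and the inclusion into $R^a$, so that the two summands $J^a$ and $\Omega_S X$ can be analyzed independently and so that the lift of the $\Omega_S X$-piece lands in $IR^a$. This is essentially built into the proof of Lemma~\ref{lem:syz}, but it is what permits the summand-by-summand identification above and hence the clean splitting of $\Tor_1^R$ into the two terms claimed.
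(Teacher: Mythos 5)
Your proposal is correct, but it cannot be "compared with the paper's proof" in the usual sense: the paper gives no proof of this lemma at all, simply citing it from Nasseh--Sather-Wagstaff (the first and sixth formulas of \cite[Lemma 2.3]{NS}). What you have written is a genuine, essentially self-contained derivation, and it checks out. The two points where the argument could have gone wrong are handled properly. First, the splitting $\Omega_R X \cong J^{\oplus a}\oplus \Omega_S X$ of Lemma~\ref{lem:syz} is only stated as an abstract isomorphism, so you do need the refinement that it is realized \emph{inside} $R^{\oplus a}$, with $J^{\oplus a}=\ker(R^{\oplus a}\to S^{\oplus a})$ and with $\Omega_S X\subseteq \m S^{\oplus a}$ lifted into $I^{\oplus a}$; this works because $I\to\m$ is an isomorphism (so the lift is canonical) and $I\cap J=0$, and you correctly flag it as the crux. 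Second, a kernel of a map defined on a direct sum does not generally split as a sum of kernels; here it does only because in each part one of the two component maps is identically zero ($JY=0$ in part (1), and $IZ=0$ together with the lift landing in $I^{\oplus a}$ in part (2)), which you use implicitly and correctly. The remaining identifications --- $J\otimes_R Y\cong (\n/\n^2)\otimes_k (Y/\m Y)$, $\ker(\n\otimes_T Z\to Z)\cong\Tor_1^T(k,Z)\cong k^{\oplus\beta_1^TZ}$, and $\Omega_SY\otimes_RZ\cong (\Omega_SY/\m\Omega_SY)\otimes_k(Z/\n Z)\cong \Tor_1^S(Y,k)^{\oplus\beta_0^TZ}$ --- are all accurate. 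In short, you have supplied a proof where the paper supplies only a reference, and your argument would serve as a correct replacement for that citation.
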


    \begin{prop}\label{prop:tor1} Let $Y$ be a non-zero $S$-module  and $Z$ a  $T$-module. 
    \begin{enumerate}
\item If $\Tor_1^R(Y,Z) = 0$, then $Z$ is a free $T$-module. 
\item If $Y$ and $Z$ are free over $S$ and $T$, respectively, then $\Tor_1^R(Y,Z) = 0$.
\end{enumerate}
\end{prop}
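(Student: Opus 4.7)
The plan is to read both statements off directly from Lemma~\ref{lem:tor1}(2), which decomposes
$$\Tor^R_1(Y,Z) \cong \Tor_1^S(Y,k)^{\oplus\beta_0^TZ}\oplus \left(\frac{Y}{\m Y}\right)^{\oplus\beta_1^TZ}.$$
The key observation is that, among the two summands, the second one is the ``sensitive'' one: it carries a copy of $Y/\m Y$ for every generator of the first syzygy $\Omega_T Z$, and $Y/\m Y$ is nonzero as soon as $Y$ is nonzero (by Nakayama). This is exactly the summand that will force $Z$ to be $T$-free.

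For part~(1), I would argue as follows. Assume $\Tor_1^R(Y,Z)=0$, so both summands above vanish. If $Z=0$, then $Z$ is trivially $T$-free, so I may assume $Z\neq 0$. Since $Y$ is nonzero and finitely generated over the local ring $S$, Nakayama's lemma gives $Y/\m Y\neq 0$. The vanishing of the second summand then forces $\beta_1^T Z=0$, i.e., the minimal first syzygy $\Omega_T Z$ has no generators and is therefore zero. Hence $Z$ admits a free $T$-resolution of length $0$, so $Z$ is $T$-free.

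For part~(2), freeness of $Y$ over $S$ gives $\Tor_1^S(Y,k)=0$, killing the first summand, while freeness of $Z$ over $T$ gives $\Omega_T Z=0$ and hence $\beta_1^T Z=0$, killing the second summand. Both summands vanish, so $\Tor_1^R(Y,Z)=0$.

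There is no real obstacle here; the only point of care is remembering to dispose of the degenerate case $Z=0$ in part~(1) before invoking Nakayama, since $\beta_0^T Z=0$ in that case and the identity gives no information from the first summand. Everything else is a direct unpacking of Lemma~\ref{lem:tor1}(2) together with the standard fact that a finitely generated module over a local ring with no minimal first syzygy generators is free.
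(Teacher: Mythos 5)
Your proof is correct. For part (1) you argue exactly as the paper does: read off the summand $\bigl(\tfrac{Y}{\m Y}\bigr)^{\oplus\beta_1^TZ}$ from Lemma~\ref{lem:tor1}(2), use Nakayama to see $Y/\m Y\ne 0$, and conclude $\beta_1^TZ=0$; your extra care about the degenerate case $Z=0$ is harmless but not actually needed, since the conclusion $\beta_1^TZ=0$ already implies freeness whether or not $Z$ vanishes. For part (2), however, you take a genuinely different route. You kill both summands of the same decomposition: $S$-freeness of $Y$ gives $\Tor_1^S(Y,k)=0$ and $T$-freeness of $Z$ gives $\beta_1^TZ=0$. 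The paper instead reduces to the case $Y=S$, $Z=T$ and computes directly that $\Tor_1^R(S,T)=\Tor_1^R(R/J,R/I)\cong \frac{J\cap I}{JI}=0$, using that $I=\m\times 0$ and $J=0\times\n$ intersect trivially. Your argument is more uniform (a single application of the imported formula handles both directions) and avoids the reduction step, but it leans entirely on the full strength of the quoted formula from \cite{NS}; the paper's computation for (2) is self-contained and makes visible the structural reason the Tor vanishes, namely $I\cap J=0$ in the fiber product. Either way the result stands.
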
 
 \begin{proof}
If $\Tor_1^R(Y,Z) = 0$, then $\big(\frac{Y}{\m Y}\big)^{\oplus\beta_1^TZ}=0$, by Lemma \ref{lem:tor1}.
    Since $\frac{Y}{\m Y} \ne 0$ by Nakayama's Lemma, we have $\beta^T_1Z=0$, that is, $Z$ is free as a $T$-module.  For the converse, we may assume $Y=S$ and $Z=T$.  As in \cite[Remark 2.6]{NS}, 
    one has $\Tor_1^R(S,T) =\Tor_1^R(R/J,R/I)\cong \frac{J\cap I}{JI}  = 0$. 
        \end{proof}

When $Y$ is a free $S$-module, we obtain quantitative information about $\Tor_1^R(Y,Z)$:

    \begin{prop}\label{prop:torSZ} Let $Z$ be a $T$-module and $a\in \N$.  Then $\Tor^R_1(S^{\oplus a},Z)$ 
    is a $k$-vector space whose dimension is equal to
    $a\beta_1^TZ$.
        \end{prop}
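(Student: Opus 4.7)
The plan is to apply Lemma \ref{lem:tor1}(2) directly with $Y = S^{\oplus a}$, which is the $S$-module hypothesis required, and then simplify the two summands using the fact that $Y$ is $S$-free.

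First I would instantiate the formula from Lemma \ref{lem:tor1}(2):
\[
\Tor^R_1(S^{\oplus a}, Z) \;\cong\; \Tor_1^S(S^{\oplus a}, k)^{\oplus \beta_0^T Z} \;\oplus\; \left(\frac{S^{\oplus a}}{\mathfrak{m}\, S^{\oplus a}}\right)^{\oplus \beta_1^T Z}.
\]
Next I would observe that the first summand vanishes: since $S^{\oplus a}$ is $S$-free, $\Tor_1^S(S^{\oplus a}, k) = 0$. For the second summand, $S/\mathfrak{m} \cong k$, so $S^{\oplus a}/\mathfrak{m} S^{\oplus a} \cong k^{\oplus a}$ as $R$-modules (the $R$-action factoring through the $k$-action).

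Combining these two observations gives
\[
\Tor^R_1(S^{\oplus a}, Z) \;\cong\; (k^{\oplus a})^{\oplus \beta_1^T Z} \;\cong\; k^{\oplus\, a\, \beta_1^T Z},
\]
which is a $k$-vector space of dimension $a \beta_1^T Z$, as claimed.

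There is no real obstacle here; the statement is essentially an immediate corollary of Lemma \ref{lem:tor1}(2). The only thing to double-check is that the isomorphism in Lemma \ref{lem:tor1}(2) is compatible with the $k$-vector space structure on the right-hand side, which it is because both summands are already annihilated by $\mathfrak{m} \times \mathfrak{n}$.
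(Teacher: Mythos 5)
Your proof is correct, but it takes a different route from the paper's. You obtain the result as an immediate corollary of Lemma~\ref{lem:tor1}(2): substituting $Y=S^{\oplus a}$ kills the first summand $\Tor_1^S(S^{\oplus a},k)^{\oplus\beta_0^TZ}$ by freeness, and the second summand is visibly $k^{\oplus a\beta_1^TZ}$. The paper instead argues by hand: it takes a minimal free presentation $0\to\Omega^1_TZ\to T^n\to Z\to 0$, applies $S\otimes_R-$, uses Proposition~\ref{prop:tor1}(2) to see that $\Tor_1^R(S,T^n)=0$, and then counts $k$-dimensions along the resulting four-term exact sequence to identify $\dim_k\Tor_1^R(S,Z)$ with $\beta_0^T(\Omega^1_TZ)=\beta_1^TZ$. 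Your argument is shorter and leans on the full strength of the imported formula from Nasseh and Sather-Wagstaff, whereas the paper's argument is essentially self-contained, needing only the single vanishing $\Tor_1^R(S,T)=0$ and Nakayama-style bookkeeping. Your closing remark about compatibility of the isomorphism with the $k$-structure is the right thing to check and is handled correctly: both summands are annihilated by $\m\times\n$, so the dimension count is unambiguous. Both proofs are valid.
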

    \begin{proof}  Letting $n=\beta^T_0Z$ and  $Z_1 = \Omega^1_TZ$, we get an exact sequence
    $$
    0 \to Z_1 \to T^n \to Z \to 0.
    $$
   For $a=1$, apply the functor $S\otimes_R - \ \ \big(= (R/J)\otimes_R - \big) $  to this short exact sequence
   to get the exact 
 sequence
    $$
    0 \to \Tor_1^R(S,Z) \to Z_1/JZ_1 \to (T/J)^n \to Z/JZ \to 0 \,.
    $$
    The zero on the left is by Proposition~\ref{prop:tor1}(2).  Each of these modules is annihilated by
    the maximal ideal $I\oplus J$  of $R$, so they are $k$-vector spaces.  Moreover the last two non-zero terms have the same $k$-dimension, namely $n$.  It follows that the first two terms have the same dimension, and hence that $\dim_k\big(\Tor_1^R(S,Z)\big) =
    \dim_k(Z_1/JZ_1) = \dim_k(Z_1/\n Z_1) = \beta_0^TZ_1 = \beta_1^TZ$.
        
        For $a>1$, $\Tor_1^R(S^{\oplus a},Z) =  \Tor_1^R(S,Z)^{\oplus a}$ and the dimension is  $a\beta_1^TZ$.
        \end{proof}

 Part (1) of Theorem~\ref{evodthm} is      a generalization of a result of  Nasseh and Sather-Wagstaff
        \cite[Lemma 2.4]{NS} and part (2) generalizes Proposition~\ref{prop:tor1}(1).

\begin{theorem} \label{evodthm} Let $X$  and $Y$ be $S$--modules,  let $Z$ be a
$T$-module, 
and let  $m$  be a 
nonnegative integer. 
 \begin{enumerate} 
\item[\rm (1)]$\Tor^R_{2m+1}(X,Y)= 0 \implies X=0$ or $Y=0$. 
% \item[\rm (2)]$\Tor^R_{2m+1}(Z_1,Z_2)= 0 \implies Z_1=0$ or $Z_2=0$.
   \item[\rm (2)]$\Tor^R_{2m}(Y, Z)= 0 \implies Y=0$ or $Z=0$.
   \item[\rm (3)]$\Tor^R_{2m}(X,Y)= 0 $  and $X\ne 0 \implies Y$ is a free $S$-module. 
   \item[\rm (4)] If $T$ is not a discrete valuation domain (DVR),
    then \\ $\Tor^R_{2m}(X,Y)= 0 \implies X=0$ or $Y=0$. 
   \item[\rm (5)]$\Tor^R_{2m+1}(Y, Z)= 0$  and $Y\ne 0 \implies Z$  is a free $T$-module. 
   \item[\rm (6)] If $S$ is not a DVR and $m> 0$,
   then $\Tor^R_{2m+1}(Y, Z)= 0 \implies Y=0$ or $Z=0$.
\end{enumerate}
By symmetry, the corresponding statements hold if $X$  and $Y$  are $T$--modules and $Z$ is an
$S$-module. 
\end{theorem}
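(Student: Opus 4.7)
The plan is to reduce every part of the theorem to a computation of $\Tor_1^R$ of a specific direct sum, via the standard shift $\Tor_{n+1}^R(X,Y)\cong\Tor_1^R(\Omega_R^n X,Y)$. The main preliminary step is to iterate Lemma~\ref{lem:syz} together with its symmetric analogue: starting from an $S$-module $Y$, one has $\Omega_R Y\cong\n^{\oplus\beta_0^SY}\oplus\Omega_S Y$, a direct sum of a $T$-module and an $S$-module. Applying $\Omega_R$ again, using the symmetric version of Lemma~\ref{lem:syz} on the $T$-summand, one sees by induction that $\Omega_R^n Y\cong P_n\oplus Q_n$, where $P_n$ is an $S$-module and $Q_n$ is a $T$-module. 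A direct bookkeeping will show that, provided $Y\neq 0$, the summand $Q_n$ contains $\n^{\oplus c}$ with $c\geq 1$ whenever $n$ is odd, and $P_n$ contains $\m^{\oplus c}$ with $c\geq 1$ whenever $n$ is even and $n\geq 2$; these positivity claims use only $\beta_1^S k,\beta_1^T k\geq 1$, which hold since $S\neq k\neq T$.

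With this decomposition in hand, each part will follow by applying Lemma~\ref{lem:tor1} (or its symmetric version) separately to $\Tor_1^R(P_n,-)$ and $\Tor_1^R(Q_n,-)$, choosing the variant dictated by whether the other argument is an $S$-module or a $T$-module. For part~(1), the summand $(Y/\m Y)^{\oplus(\beta_1^T k)(\beta_0^S P)}$ of $\Tor_1^R(P,Y)$ from Lemma~\ref{lem:tor1}(1) is nonzero once $Y\neq 0$ and either $m=0$ (so $P=X\neq 0$) or $m\geq 1$ (so $P_{2m}$ carries an $\m$-summand), forcing $Y=0$. Part~(2) is dual: a $(Z/\n Z)$-summand arising from $\Tor_1^R(Q_{2m-1},Z)$ via the symmetric form of Lemma~\ref{lem:tor1}(1) forces $Z=0$; the base case $m=0$ is the direct calculation $Y\otimes_R Z\cong(Y/\m Y)\otimes_k(Z/\n Z)$. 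For parts~(3) and~(5), vanishing of the summand $\Tor_1^S(Y,k)^{\oplus\beta_0^T Q}$ (respectively $(P/\m P)^{\oplus\beta_1^T Z}$) forces $Y$ to be $S$-free (respectively $Z$ to be $T$-free).

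Parts~(4) and~(6) will be the main obstacle. They require the auxiliary fact that a local ring whose maximal ideal is free as a module is necessarily a DVR, a short argument via expanding $x_1 x_2$ in two ways when $\n$ has a basis $x_1,\ldots,x_r$ with $r\geq 2$. Granted this, part~(4) begins by invoking part~(3) to reduce to the case $Y=S^{\oplus c}$ with $c\geq 1$, and then computes $\Tor_{2m}^R(X,S)=\Tor_1^R(P_{2m-1}\oplus Q_{2m-1},S)$ via Lemma~\ref{lem:tor1}; the $\Tor_1^S$ terms vanish because $S$ is $S$-free, and the remaining $k$-summands vanish only when $P_{2m-1}=0$ and $Q_{2m-1}$ is $T$-free, forcing $\n$ (which is a summand of $Q_{2m-1}$) to be $T$-free, whence $T$ is a DVR -- contradicting the hypothesis. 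Part~(6) is the precise symmetric argument, applied after part~(5) reduces to $Z=T^{\oplus c}$: the vanishing of $\Tor_{2m+1}^R(Y,T)$ forces $P_{2m}$ to be $S$-free, whose $\m$-summand (nontrivial for $m\geq 1$) then forces $\m$ to be $S$-free and $S$ to be a DVR. The final assertion of the theorem is immediate by swapping $S$ and $T$ throughout.
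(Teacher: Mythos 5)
Your proposal is correct, and it reaches the theorem by a noticeably different organization than the paper, even though both arguments run on the same two engines, Lemma~\ref{lem:syz} and the two formulas of Lemma~\ref{lem:tor1}. The paper never writes down the full iterated syzygy $\Omega_R^n$: it peels off one or two syzygies at a time and runs several interleaved inductions on the homological degree --- part (1) for $m>0$ is an induction that alternates between the $S$-module and $T$-module versions of the statement via the summand $\Tor^R_{2m-1}(\n^{\oplus a},\n^{\oplus b})$; parts (3) and (5) are proved together by bouncing back and forth between the two shift equations \eqref{eq:syz-shift}; and parts (4) and (6) follow by applying (5) and (3) to $\Tor^R_{2m-1}(X,\n^{\oplus b})$ and $\Tor^R_{2m}(Y,\m^{\oplus c})$, using (as you do) that $\n$ free over $T$ forces $T$ to be a DVR --- a fact the paper asserts without proof and you actually verify. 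Your route instead front-loads all the combinatorics into the single structural statement $\Omega_R^nY\cong P_n\oplus Q_n$ with the tracked $\m$- and $\n$-summands (whose positivity bookkeeping checks out, since $\beta_0^S\m\ge1$ and $\beta_0^T\n\ge1$), after which every part collapses to one application of Lemma~\ref{lem:tor1} in degree one; this buys uniformity and eliminates the inductions on $m$ at the price of the preliminary decomposition lemma. Your treatments of (4) and (6) also differ in detail from the paper's: you first normalize the \emph{other} module to $S^{\oplus c}$ or $T^{\oplus c}$ using (3) or (5) and then locate the offending $\n$- or $\m$-summand inside the odd or even syzygy of the first module, whereas the paper finds it inside the first syzygy of the second module; both land on the same contradiction. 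The one loose end: your shifting template $\Tor^R_{2m}(-,-)=\Tor^R_1(\Omega_R^{2m-1}(-),-)$ is vacuous when $m=0$ in parts (3) and (4), but those cases are exactly the Nakayama computation you already record for the base case of (2), so you should simply say they are covered by it.
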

\begin{proof} 
  Put $a=\beta_0^SX$,   $b=\beta_0^SY$, $c = \beta_0^TZ$, and  $d=\beta_1^Tk= \beta^T_0\n$.  The assumption $T\ne k$ in
  Setting \ref{fpset} implies that $d\ne 0$.

The case $m=0$ of  (1) is   
  \cite[Lemma 2.4]{NS}; we give their proof for completeness.    
  Thus we assume $\Tor^R_{1}(X,Y) = 0$ and $X\ne 0$ (so $a\ne 0$).  
 By Lemma~\ref{lem:tor1}(1), 
  \[
 \Tor^R_{1}(X,Y) \cong \Tor^S_{1}(X,Y)\oplus 
\big(\frac{Y}{\m Y}\big)^{\oplus da}\,.
\]
Since $da\ne0$, we have $\frac{Y}{\m Y} = 0$. Hence $Y=0$, and so (1) holds for $m=0$.
 
 For (1) with $m>0$, proceed by induction.  Assume 
$\Tor^R_{2m+1}(X,Y) = 0$,  
and that $$\Tor^R_{2m-1}(X',Y')=0\implies X'=0\text{ or }Y'=0$$
whenever $X'$ and $Y'$ are $S$-modules.  By symmetry,  
 $$\Tor^R_{2m-1}(V,W)=0\implies V=0\text{ or }W=0$$   
whenever $V$ and $W$ are $T$-modules.  
  Using Lemma~\ref{lem:syz}, we get
$$\aligned
0&=\Tor^R_{2m+1}(X,Y)=\Tor^R_{2m}(\Omega_RX,Y)=\Tor^R_{2m-1}(\Omega_RX,\Omega_RY)\\%=Tor^R_{2m}(\n^{\oplus a}\oplus \Omega_SA, B)=\Tor^R_{2m-1}(\n^{\oplus a}\oplus \Omega_SA,  \Omega_RB)\\
&=\Tor^R_{2m- 1}(\n^{\oplus a}\oplus \Omega_SX,  
\n^{\oplus b}\oplus \Omega_SY)=\Tor^R_{2m-1}(\n^{\oplus a},  \n^{\oplus b}) \oplus \boxed{\text{other terms}}.
%\phantom{\Tor^R_{2m- 1)  }(\n^{\oplus a}} }.
%\oplus \Omega_SA,  \n^{\oplus b}\oplus \Omega_SB).
\endaligned$$
This implies $0=\Tor^R_{2m-1}(\n^{\oplus a},  \n^{\oplus b})$. %\oplus \Tor^R_{2m-3}(\n^s,\Omega_SB),  \oplus \Tor^R_{2m-3}( \Omega_SA,  \n^t)\oplus \Tor^R_{2m-3}( \Omega_SA,  \Omega_SB)$$
Since $\n^{\oplus a}$ and $\n^{\oplus b}$ are both $T$-modules, the inductive hypothesis implies that
 $\n^{\oplus a} = 0$ or $\n^{\oplus b}=0$. % and $ \Omega_SA$ or  $ \Omega_SB=0$. 
But $\n\ne 0$, and so  $a=0$ or $b=0,$ that is, $X=0$ or $Y=0$, as desired for (1). 

%By symmetry, the ``Also" statement of (1)  holds for the $T$-modules $C$ and $D$.

If $Y\otimes_RZ=0$, then $\overline Y\otimes_k\overline Z = 0$, where $\overline Y$ and $\overline Z$ are reductions modulo the maximal ideal of $R$.  Therefore either $\overline Y=0$ or $\overline Z = 0$, and Nakayama's Lemma implies that $Y=0$ or $Z=0$.  This gives the case $m=0$ in (2), and the same argument
verifies the case $m=0$ in statements   (3)  and (4).

To see that (2) holds for  $m\ge 1$, we use Lemma~\ref{lem:syz}: 
$$
\aligned 0&=\Tor^R_{2m}(Y,Z)=  \Tor^R_{2m-1}(\Omega_R Y,Z) = 
\Tor^R_{2m-1}(\n^{\oplus b}\oplus \Omega_SY,Z)\\
&=\Tor^R_{2m-1}(\n^{\oplus b},Z)
\oplus \Tor^R_{2m-1}( \Omega_SY,Z)
 \implies 0
=\Tor^R_{2m-1}(\n^{\oplus b},Z).\endaligned
$$ 
By  Equation~\eqref{eq:iso}, $\n^{\oplus b}$ is a $T$-module,
as is $Z$. If $Z\ne 0$, then part (1) %Lemma~\ref{lem:tor1SS} 
(applied to $T$) shows $\n^{\oplus b} = 0$.  One of the assumptions in 
Setting~\ref{fpset} is that $\n\ne0$.  Therefore $\beta^S_0Y = b = 0$, that is, $Y=0$. A symmetric 
argument shows that $Y\ne 0$ implies $Z=0$, and so (2) holds.
%For $m>1$, By (1), this implies that $n^{\oplus a}=0$, and so $a=0$  or $C=0$, and so $A=0$ or $C=0$.

Next observe that Proposition~\ref{prop:tor1}(1) implies that (5)  holds for $m=0$.  Taking first syzygies
over $R$, shifting, and using Lemma~\ref{lem:syz} (and its counterpart over $T$), we obtain, for every $s\ge 2$,
\begin{equation}\label{eq:syz-shift}
\begin{gathered}
\Tor_s^R(X,Y) =  \Tor^R_{s-1}(X,\n^{\oplus b}) \oplus \Tor_{s-1}^R(X,\Omega_SY) \quad\text{and} \\
\Tor_s^R(Y,Z) =  \Tor^R_{s-1}(Y,\m^{\oplus c}) \oplus \Tor_{s-1}^R(Y,\Omega_TZ) \,.
\end{gathered}
\end{equation}

Setting $s=2$ in the first equation, we deduce the case $m=1$ of (3) from the case $m=0$ of (5).  Setting $s=3$ in the second equation, we deduce the case $m=1$ of (5) from the case $m=1$ of (3).  Next we set $s=4$ in the first equation and get the case $m=2$ of (3) from the case $m=1$ of (5).  Continuing in this way, alternating between the two equations, we obtain (3) and (5) by induction.  
%%%To show (3) and (5), we use induction on both at once. To see that (3) holds with $m>0$, assume that (3) and %%%$$\Tor_{2m}^R(X,Y)=\Tor_{2m-1}^R(X, \Omega_RY)= \Tor_{2m-1}^R(X, \Omega_SY)
%%%\oplus \Tor_{2m-1}^R(X, \n^{\oplus b})

For (4), suppose $m>0$ , $X\ne 0$, and $\Tor^R_{2m}(X,Y) = 0$.  Using Lemma~\ref{lem:syz}, we learn that
  $\Tor_{2m-1}^R(X, \n^{\oplus b}) = 0$.  Now (5) implies that $\n^{\oplus b}$ is a free $T$-module.  Since $\n$ is {\em not} $T$-free (as $T$ is not a DVR) we conclude that $b=0$, that is, $Y=0$.

To prove (6), suppose $m>0$, $Y\ne0$, and $\Tor^R_{2m+1}(Y,Z) = 0$.  Then 
$\Tor^R_{2m}(Y,\m^{\oplus c})=0$, and now (3) implies that $\m^{\oplus c}$ is free.  As in the proof of (4), it
follows that $Z=0$.
\end{proof}
We note that item (6) of Theorem~\ref{evodthm} does not hold when $m= 0$, since one always has
$\Tor^R_1(S,T) = 0$.
       %%%%%%%%%%%%%%
%\begin{thm}\label{thm:tor2} Let  $Y$  be an $S$-module and $Z$ a $T$-module.
%If $\Tor_2^R(Y,Z) = 0$, then $Y=0$ or $Z=0$.
%\end{thm}
%\begin{proof} We have $\Tor^R_1(\Omega_RY,Z) = \Tor^R_2(Y,Z) = 0$.  By Lemma~\ref{lem:syz}, we have
%$\Tor^R_1(J^{\oplus n}, Z) = 0$, where $n= \beta_0^SY$.  By \eqref{eq:iso}, $J^{\oplus n}$ is a $T$-module,
%as is $Z$. If $Z\ne 0$, then Lemma~\ref{lem:tor1SS} (applied to $T$) shows $J^{\oplus n} = 0$.  One of the assumptions in Setting~\ref{fpset} is that $J\ne0$.  Therefore $\beta^S_0Y = n = 0$, that is, $Y=0$.
%\end{proof}
%%%%%%%%%%%%%%%%%%%%

\begin{cor} \label{thm:tor3}
Let  $Y$ and $Z$ be non-zero modules over $S$ and $T$ respectively, and let  $m\ge 0$.
If    $\Tor_m^R(Y,Z) = 0$, then $m$ is odd and
    %$\pd_SY + \pd_TZ \leq m-3$. 
    $Y$ and $Z$ are free.  \end{cor}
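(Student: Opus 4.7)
The plan is to read off the conclusion directly from the relevant parts of Theorem~\ref{evodthm}, using the symmetry statement at the end of that theorem to handle the freeness of $Y$.

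First I would dispose of the even case. Write $m = 2m'$. The case $m' = 0$ is handled by the $m=0$ case of Theorem~\ref{evodthm}(2), which uses Nakayama's lemma on $Y \otimes_R Z$; the case $m' \geq 1$ is exactly part (2) of the theorem. In either situation we would conclude $Y = 0$ or $Z = 0$, contradicting the hypothesis. Hence $m$ must be odd.

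Write $m = 2m' + 1$. Since $Y \neq 0$, part (5) of Theorem~\ref{evodthm} applies and gives immediately that $Z$ is free over $T$. To obtain the freeness of $Y$, I would invoke the symmetry statement at the end of the theorem: the same result (5) is available with the roles of $S$ and $T$ interchanged, that is, with an $S$-module in the ``$Z$'' slot and a $T$-module in the ``$Y$'' slot. Applying this symmetric version to $\Tor^R_{2m'+1}(Z, Y) \cong \Tor^R_{2m'+1}(Y,Z) = 0$ together with $Z \neq 0$ yields that $Y$ is free over $S$.

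There is no real obstacle here; the whole statement is essentially a bookkeeping corollary of Theorem~\ref{evodthm}. The only slightly subtle point is making sure both orientations of part (5) are used: one directly, and one via symmetry plus the natural symmetry of $\Tor$, so that both $Y$ and $Z$ end up free over their respective rings.
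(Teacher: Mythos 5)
Your proposal is correct and matches the paper's own argument, which simply cites parts (2) and (5) of Theorem~\ref{evodthm}: part (2) rules out even $m$, and part (5), applied once directly and once via the symmetry statement together with the commutativity of $\Tor$, gives the freeness of $Z$ and of $Y$. Your extra split of the even case into $m'=0$ and $m'\ge 1$ is harmless but unnecessary, since part (2) is stated for all nonnegative $m$.
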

 \begin{proof}
Use (2) and (5) of Theorem~\ref{evodthm}.
\end{proof}

\begin{remark}\label{rem:depth}
$\depth R = \min\{\depth S, \depth T, 1\}\,.$
\end{remark}
Remark~\ref{rem:depth} follows from the work of Lescot \cite{L81}.  See, for example, \cite[(3.2) Remark]{CSV}.
Actually, a low-tech, direct proof is easy:  Note first that an element $(s,t)\in R$ is a non-zerodivisor (NZD) of $R$ if and only if $s$ is a NZD of $S$ and $t$ is a NZD of $T$.  It follows that $\depth R >0 \iff \depth S >0$ and $\depth T>0$.  To see that $\depth R \le 1$, suppose that $(s,t)$ is a NZD of $R$ in $\m\times\n$, and let $(u,v)$ be a arbitrary element of $\m\times \n$.  Then $(s,0)(u,v) = (u,0)(s,t)\in R(s,t)$. Moreover, $(s,0)\notin R(s,t)$:
Indeed, if $(s,0)= (a,b)(s,t)$, then $b=0$, as $t$ is a NZD; also, the equation $as=s$ forces $a=1$, a contradiction, since $(1,0)\notin R$.  Thus every element of $\m\times \n$ is a zero-divisor modulo $R(s,t)$.

\begin{abf}\label{rem:ABF} %{\it Auslander-Buchsbaum Formula}
\cite[A.5. Theorem, p. 310]{LW} Let $M$ be a finitely generated module  of finite projective dimension over a local ring $(A,\m_A)$. Then $$\depth M+\pd_AM=\depth A.$$
\end{abf}

From now on, our conclusions are going to be that one of the modules has finite projective dimension over $R$.  It is important to realize, however, the following fact:

\begin{fact}\label{pdfin1} If  $M$ is an
 $R$-module with $\pd_RM <\infty$, then  $\pd_RM \le 1$.
 \end{fact}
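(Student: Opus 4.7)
The plan is to deduce this immediately from the two results that were just recalled: Remark~\ref{rem:depth}, which gives $\depth R = \min\{\depth S, \depth T, 1\} \le 1$, and the Auslander--Buchsbaum Formula~\ref{rem:ABF}, which says that for a finitely generated $R$-module $M$ of finite projective dimension,
\[
\pd_R M = \depth R - \depth M.
\]

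First I would dispose of the trivial case $M = 0$, where $\pd_R M = -\infty \le 1$ by convention. Assuming $M \ne 0$, the depth $\depth M$ is a non-negative integer. Combining this with the Auslander--Buchsbaum formula and the bound $\depth R \le 1$ from Remark~\ref{rem:depth}, I obtain
\[
\pd_R M = \depth R - \depth M \le \depth R \le 1,
\]
which is the desired conclusion.

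There is no serious obstacle here; the whole content of the fact is encapsulated in Remark~\ref{rem:depth}, which is the substantive input. The point of stating the fact explicitly seems to be to flag, for use in subsequent sections, that over a fiber product ring the only meaningful finite projective dimensions are $0$ and $1$, so any later hypothesis of the form ``$M$ has finite projective dimension over $R$'' automatically forces $M$ to be free or to admit a length-one minimal free resolution.
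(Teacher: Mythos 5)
Your proof is correct and is exactly the argument the paper intends: the paper's own ``proof'' simply cites the Auslander--Buchsbaum Formula~\ref{rem:ABF} together with Remark~\ref{rem:depth}, and you have merely spelled out the one-line computation $\pd_R M = \depth R - \depth M \le \depth R \le 1$ that those two references encode. Nothing is missing.
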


 \begin{proof} See the Auslander-Buchsbaum Formula~\ref{rem:ABF} % \cite[A.5. Theorem]{LW}
and Remark~\ref{rem:depth}.
\end{proof}

We now analyze implications of $\Tor_\m^R(M, Y)=0$, where $M$ is an 
arbitrary $R$-module, that is, not necessarily  an $S$-module or a $T$-module.  The conditions imposed in the next theorem may appear a bit contrived, but Example~\ref{eg:DVRs}, which follows the proof of the theorem, shows that they are exactly what is needed.

\begin{thm}\label{thm:tor4} Let $M$ be an $R$-module and $Y$ a non-zero $S$-module.  Assume
that $\Tor^R_m(Y,M) = 0$ for some $m\ge4$ and that 
at least one of these two
 conditions holds:
%[$($a$)$]
\begin{enumerate}
\item $T$ is not a discrete valuation ring, or
\item $Y$ is not a free $S$-module.
\end{enumerate}
Then $\pd_RM \le 1$.

If, in addition,  $S$ or $T$ has depth $0$, then $M$ is a free
$R$-module.
\end{thm}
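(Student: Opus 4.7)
The plan is to deduce from $\Tor^R_m(Y,M)=0$ the two vanishings $\Tor^R_1(\m,M)=0$ and $\Tor^R_1(\n,M)=0$. Because $\Omega_R k\cong I\oplus J\cong \m\oplus\n$ as $R$-modules by \eqref{eq:iso}, these together give $\Tor^R_2(k,M)=0$, hence $\pd_R M<\infty$, and then $\pd_R M\le 1$ by Fact~\ref{pdfin1}. I first reduce to $m=4$: shifting the second argument, $\Tor^R_m(Y,M)=\Tor^R_4(Y,\Omega_R^{m-4}M)=0$, so proving the result for $m=4$ yields $\pd_R \Omega_R^{m-4}M\le 1$, whence $\pd_R M$ is finite and therefore $\pd_R M\le 1$.

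Now assume $m=4$, with $b=\beta_0^S Y\ge 1$, $c=\beta_0^T\n\ge 1$, and $c'=\beta_0^S\m\ge 1$. Applying Lemma~\ref{lem:syz} (and its symmetric version for $T$-modules) and shifting three times,
\begin{align*}
0&=\Tor^R_4(Y,M)=\Tor^R_3(\n^{\oplus b},M)\oplus \Tor^R_3(\Omega_S Y,M),\\
0&=\Tor^R_3(\n,M)=\Tor^R_2(\m^{\oplus c},M)\oplus \Tor^R_2(\Omega_T\n,M),\\
0&=\Tor^R_2(\m,M)=\Tor^R_1(\n^{\oplus c'},M)\oplus \Tor^R_1(\Omega_S\m,M).
\end{align*}
Reading off the $\n^{\oplus b}$, $\m^{\oplus c}$, and $\n^{\oplus c'}$ summands yields $\Tor^R_3(\n,M)=\Tor^R_2(\m,M)=\Tor^R_1(\n,M)=0$. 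What remains is $\Tor^R_1(\m,M)=0$, and this is exactly where hypotheses (1) and (2) enter.

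In case (1), $T$ is not a DVR, so $\n$ is not $T$-free; hence $\Omega_T\n\ne 0$ and $\beta_1^T\n\ge 1$. Applying the symmetric form of Lemma~\ref{lem:syz} to $\Omega_T\n$, the vanishing $\Tor^R_2(\Omega_T\n,M)=0$ extracted above shifts to $\Tor^R_1(\m^{\oplus\beta_1^T\n}\oplus\Omega_T^2\n,M)=0$, whose $\m$-summand forces $\Tor^R_1(\m,M)=0$. In case (2), $Y$ is not $S$-free, so $\Omega_S Y\ne 0$ and $\beta_1^S Y\ge 1$. Applying Lemma~\ref{lem:syz} to $\Omega_S Y$, the vanishing $\Tor^R_3(\Omega_S Y,M)=0$ shifts to $\Tor^R_2(\n^{\oplus\beta_1^S Y}\oplus\Omega_S^2 Y,M)=0$, giving $\Tor^R_2(\n,M)=0$; one further shift, via the symmetric Lemma~\ref{lem:syz} applied to $\n$, produces $\Tor^R_1(\m,M)=0$. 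Either way, $\Tor^R_2(k,M)=0$ and the first assertion follows.

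For the last statement, if $\depth S=0$ or $\depth T=0$, then Remark~\ref{rem:depth} gives $\depth R=0$; combined with $\pd_R M<\infty$, the Auslander--Buchsbaum Formula~\ref{rem:ABF} forces $\pd_R M=0$, so $M$ is $R$-free. The main challenge is tracking which summands in the various $\Omega_R(-)$ decompositions are guaranteed non-zero: hypotheses (1) and (2) are designed precisely so that $\Omega_T\n$ or $\Omega_S Y$, respectively, is non-zero and can supply the otherwise missing vanishing $\Tor^R_1(\m,M)=0$.
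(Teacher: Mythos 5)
Your proof is correct, but it takes a genuinely different route from the paper's. The paper reduces to $m=4$, invokes the Dress--Kr\"amer splitting (Lemma~\ref{lem:Omega2splits}) to write $\Omega^2_R(\Omega_R^{m-4}M)\cong X\oplus Z$ with $X$ an $S$-module and $Z$ a $T$-module, and then feeds the vanishings $\Tor_2^R(Y,X)=\Tor_2^R(Y,Z)=0$ into the dichotomies of Theorem~\ref{evodthm}(2),(3),(4): $Z$ must vanish, and if $X\ne 0$ then $Y$ would be $S$-free and condition (1) would force $Y=0$, a contradiction. You instead bypass both Lemma~\ref{lem:Omega2splits} and Theorem~\ref{evodthm} entirely, iterating the Nasseh--Takahashi formula (Lemma~\ref{lem:syz}) to cascade the single vanishing $\Tor_4^R(Y,M)=0$ down through $\Tor_3^R(\n,M)$, $\Tor_2^R(\m,M)$, $\Tor_1^R(\n,M)$, with hypotheses (1) and (2) serving precisely to make $\Omega_T\n$ or $\Omega_S Y$ a nonzero summand that delivers the missing $\Tor_1^R(\m,M)=0$; then $\Tor_2^R(k,M)=\Tor_1^R(I\oplus J,M)=0$ kills $\beta_2^RM$ outright. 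Each step checks out (the extraction of individual summands from a vanishing direct sum is legitimate, the Betti-number positivity claims follow from $S\ne k\ne T$, $Y\ne 0$, and the standard fact that $\n$ is $T$-free only when $T$ is a DVR), and the argument correctly degenerates in Example~\ref{eg:DVRs}, where neither hypothesis holds and $\Tor_1^R(\m,S)\ne 0$. What your approach buys is self-containment --- only Lemma~\ref{lem:syz} is needed, and you obtain $\Omega^2_RM=0$ directly rather than mere finiteness of projective dimension; what the paper's approach buys is brevity given that Theorem~\ref{evodthm} and the splitting lemma are already in place and are reused in Theorems~\ref{thm:tor5}--\ref{thm:tor6s}.
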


The proof uses a lemma due to Dress and Kr\"amer.

\begin{lem}\label{lem:Omega2splits}$\phantom{x}$\cite[\emph{Bemerkung} 3]{DK75} Let $M$ be an $R$-module.  Then
$\Omega^2_RM$ decomposes as a direct sum:  $\Omega^2_RM = X\oplus Z$, where $X$
is an $S$-module and $Z$ is a $T$-module.
\end{lem}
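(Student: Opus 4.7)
My plan is to construct the decomposition directly from a minimal free resolution over $R$, exploiting the key orthogonality $IJ = 0$ of the ideals $I$ and $J$ of Setting~\ref{fpset}.

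First I would take a minimal free resolution $\cdots \to F_1 \xrightarrow{\phi} F_0 \to M \to 0$, so that $\Omega^2_R M = \ker(\phi)$. By minimality the entries of $\phi$ lie in the maximal ideal of $R$, which is $\m\times\n = I + J$; because $I\cap J = 0$, this sum is direct, and componentwise on free modules $(\m\times\n)F_1 = IF_1 \oplus JF_1$ as internal direct sums of $R$-submodules of $F_1$. In particular $\ker(\phi) \subseteq IF_1 \oplus JF_1$.

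The crucial step is to show that whenever $x \in \ker(\phi)$ is written uniquely as $x = x_I + x_J$ with $x_I \in IF_1$ and $x_J \in JF_1$, each summand individually lies in $\ker(\phi)$. Each entry of $\phi(x_I)$ lies in $(\m\times\n)\cdot I = I^2 + JI = I^2 \subseteq I$, so $\phi(x_I)\in IF_0$; symmetrically $\phi(x_J)\in JF_0$. Because $\phi(x_I) + \phi(x_J) = \phi(x) = 0$ and $IF_0 \cap JF_0 = 0$, both summands vanish, proving the claim.

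Setting $X = \ker(\phi) \cap IF_1$ and $Z = \ker(\phi) \cap JF_1$ then gives $\Omega^2_R M = X \oplus Z$. Since $J\cdot IF_1 = (JI)F_1 = 0$, the submodule $X$ is annihilated by $J$, so $X$ is an $R/J \cong S$-module; symmetrically $Z$ is a $T$-module. I do not foresee a genuine obstacle: the whole argument rides on the single identity $IJ = 0$ together with $I\cap J = 0$, both immediate from the form of the maximal ideal in Setting~\ref{fpset}.
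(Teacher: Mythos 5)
Your argument is correct. Note that the paper does not prove this lemma at all: it is quoted directly from Dress and Kr\"amer \cite[\emph{Bemerkung} 3]{DK75}, so there is no internal proof to compare against; what you have written is a clean, self-contained substitute for that citation, resting exactly where it should on the two identities $I\cap J=0$ and $IJ=0$. Two small remarks. First, the containment $\ker(\phi)\subseteq(\m\times\n)F_1$ does not follow from the entries of $\phi$ lying in the maximal ideal (that condition controls $\operatorname{im}(\phi)$, i.e.\ minimality of $F_0\to M$); it follows from minimality of the \emph{next} differential $F_2\to F_1$, whose image is $\ker(\phi)$ --- equivalently, from $F_1\to\Omega_RM$ being a minimal cover. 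Both are part of what ``minimal free resolution'' means, so nothing is broken, but the justification should point at the right map. Second, the step $\phi(x_I)\in IF_0$ needs no appeal to the entries of $\phi$ at all: $\phi(IF_1)\subseteq I\phi(F_1)\subseteq IF_0$ by $R$-linearity alone, so the computation $(\m\times\n)I=I^2$ is a harmless detour. With those cosmetic adjustments the proof is complete, and it also makes transparent why the decomposition starts at the \emph{second} syzygy: one needs the syzygy to sit inside $\m_RF$ for a free $F$, which the first syzygy of an arbitrary $M$ already does, but the induced pieces $\ker\phi\cap IF_1$ and $\ker\phi\cap JF_1$ are only defined once one is taking the kernel of a map out of a free module covering a module that is itself a syzygy.
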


\begin{proof}[Proof of Theorem~\ref{thm:tor4}]
Set $N=\Omega_R^{m-4}M$.   If we  show that $\pd_RN<\infty$, it
will follow that $M$ too has finite projective dimension, and hence projective dimension at most $1$.  Moreover, if either $S$ or $T$ has depth $0$, then $\depth R = 0$ by Remark~\ref{rem:depth}, 
and hence $\pd_RM\le 0$ by Remark~\ref{rem:ABF}.   Thus our goal is to show that $N$ has finite projective dimension if $\Tor_4^R(N,Y) = 0$.  

Write
$\Omega^2_RN\cong X \oplus Z$ as in Lemma~\ref{lem:Omega2splits}, where  $X$
is an $S$-module and $Z$ is a $T$-module.  Since  $\Tor_2^R(Y, \Omega_R^2N) = \Tor^R_4(Y,N)=0$, 
we have
\[
\Tor_2^R(Y,X) = 0 = \Tor_2^R(Y,Z)\,.
\]
Since $Y\ne 0$, Theorem~\ref{evodthm}(2) 
implies that $Z=0$. If also $X=0$, then  $\Omega^2_RN =0$, and so
$\pd N_R\le1$, and we are done. 

If, on the other hand, $X\ne0$, then Theorem~\ref{evodthm}(3) shows
that $Y$ is a free $S$-module.
 Therefore assumption (2) fails, so (1) must hold, that is,
 $T$ is not a discrete valuation ring. Now  Theorem~\ref{evodthm}(4) yields $Y= 0$,  a contradiction.
 \end{proof}
 
 \begin{comment}
 (Note that the Tor subscript involved is $m-2\ge 2$, so that (4) is relevant.)
 
 Case 2: $m$ is odd. Then Theorem~\ref{evodthm}.(1) with $Y\ne 0$
implies that $X=0$. If $Z=0$, then  $\Omega^2_RM =0$, and so
$\pd M<\infty$. Hence Fact~\ref{pdfin1} implies $\pd M\le 1.$ 
 
 If $Z\ne 0$, then  Theorem~\ref{evodthm}.(5) implies $Y$ is free. Thus assume
 $T$ is not a discrete valuation ring. By the symmetric statement to Theorem~\ref{evodthm}.(6) 
 with $T$-modules switched for $S$-modules and vice versa, $Y= 0$, a
  contradiction to the assumption.
(Here  the Tor subscript involved is $m-2\ge 3=2\cdot 1+1$, so that (6) is relevant.)
 
For the last statement, either condition forces $\depth
R = 0$, by Remark~\ref{rem:depth}.  Now  Auslander-Buchsbaum
Formula~\ref{rem:ABF} shows that $M$, being of finite projective dimension, must
have projective dimension zero.
\end{proof}

\end{comment}

Nasseh and Sather-Wagstaff ask \cite[Question 2.14]{NS} whether the
vanishing of $\Tor^R_4(M,N)$ (for a fiber product $R$) forces one of
the modules to have finite projective dimension. The following
example shows that the answer is ``no'' and justifies the hypotheses
imposed in Theorem~\ref{thm:tor4}.  (But see Corollary~\ref{cor:torsionless} for 
a result in the positive direction.)  The example also shows the need
for {\em two} vanishing Tors in the hypotheses of
\cite[Theorem 1.1(b)]{NS}. %(see Theorem~\ref{rem:recover} below).

\begin{example}\label{eg:DVRs} Let $(S,\m,k)$ and $(T,\n,k)$ be
discrete valuation rings, and let $R$ be the fiber product of $S$ and $T$.
Then  $\Omega_RS= \Omega_R(R/J) = J \cong \n \cong T$, since $\n$ is a
 principal ideal in the domain $T$.  Similarly $\Omega_RT=S$.
 Both $S$ and $T$ have non-zero annihilators and therefore are not
 free as $R$-modules.  It follows, from the syzygy relations above, that both $S$ and $T$
have infinite projective dimension over $R$.
However,

$$\Tor^R_2(S,S) =\Tor^R_1(S,\Omega_RS) \cong\Tor^R_1(S,T) = 0,$$
by Proposition~\ref{prop:tor1}(2). Similarly 

$$\Tor^R_4(S,S) =\Tor^R_2(\Omega_RS,\Omega_RS) \cong\Tor^R_2(T,T)= 0.$$
It follows that
$\Tor^R_m(S,S) = 0$ and $\Tor^R_m(T,T) = 0$ for every even positive integer,
and $\Tor^R_m(S,T) = 0$ for every odd positive integer.
\end{example}

\begin{thm}\label{thm:tor5} Let $M$ and $N$ be $R$-modules with
$\Tor_5^R(M,N) = 0$.  Then at least one of the following four things happens:
\begin{enumerate}
\item $\pd_RM\le 1$.
\item $\pd_RN\le 1$.
\item $\Omega^2_RM$ is a free $S$-module and $\Omega^2_RN$ is a free $T$-module.
\item $\Omega^2_RM$ is a free $T$-module and $\Omega^2_RN$ is a free $S$-module.
\end{enumerate}
\end{thm}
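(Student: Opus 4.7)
The plan is to reduce $\Tor_5^R(M,N) = 0$ to a $\Tor_1^R$ vanishing between second syzygies, decompose those syzygies via the Dress--Kr\"amer splitting (Lemma~\ref{lem:Omega2splits}), and then run a short case analysis using Proposition~\ref{prop:tor1} and Theorem~\ref{evodthm}(1).

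First, iterating the standard dimension-shift isomorphism $\Tor_{n+1}^R(A,B) \cong \Tor_n^R(\Omega_R A,B)$ (valid for $n \ge 1$) twice in each variable yields
\[
\Tor_1^R\bigl(\Omega_R^2 M,\ \Omega_R^2 N\bigr) \;\cong\; \Tor_5^R(M,N) \;=\; 0.
\]
By Lemma~\ref{lem:Omega2splits}, write $\Omega_R^2 M = X_M \oplus Z_M$ and $\Omega_R^2 N = X_N \oplus Z_N$, where $X_M, X_N$ are $S$-modules and $Z_M, Z_N$ are $T$-modules. Additivity of $\Tor$ then distributes the vanishing across the four summand pairs, so each of $\Tor_1^R(X_M,X_N)$, $\Tor_1^R(X_M,Z_N)$, $\Tor_1^R(Z_M,X_N)$, $\Tor_1^R(Z_M,Z_N)$ is zero.

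Next, I extract structural information from those four vanishings. Theorem~\ref{evodthm}(1) with $m=0$, together with its $T$-side symmetric version, forces the two dichotomies
\[
\{X_M = 0 \text{ or } X_N = 0\} \quad\text{and}\quad \{Z_M = 0 \text{ or } Z_N = 0\}.
\]
For the two cross pairs, Proposition~\ref{prop:tor1}(1)---combined with Lemma~\ref{lem:tor1}(2) and the symmetry of $\Tor$ to produce the analogous statement with the roles of $S$ and $T$ swapped---yields, for each vanishing $\Tor_1^R(Y,W) = 0$ with $Y$ an $S$-module and $W$ a $T$-module: if $Y \ne 0$ then $W$ is $T$-free, and if $W \ne 0$ then $Y$ is $S$-free.

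The case analysis then proceeds on the four combinations coming from the two dichotomies above. The ``pure'' combinations $X_M = Z_M = 0$ and $X_N = Z_N = 0$ collapse $\Omega_R^2 M$ or $\Omega_R^2 N$ to zero, giving conclusions (1) or (2). In the mixed combination $X_M = 0 = Z_N$, we have $\Omega_R^2 M = Z_M$ (a $T$-module) and $\Omega_R^2 N = X_N$ (an $S$-module); feeding these into the two conditionals coming from $\Tor_1^R(Z_M, X_N) = 0$, any degenerate branch (where $X_N = 0$ or $Z_M = 0$) lands back in (1) or (2), while the non-degenerate branch forces $Z_M$ to be $T$-free and $X_N$ to be $S$-free, which is conclusion (4). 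The symmetric mixed combination $Z_M = 0 = X_N$ gives conclusion (3) in the same way.

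The principal hazard is bookkeeping: one must carefully track the symmetric variants of Proposition~\ref{prop:tor1}(1) and make sure that degenerate subcases (where a nominally ``free'' summand is actually zero) are routed into conclusions (1) or (2) rather than mis-assigned to (3) or (4). Once the four $\Tor_1^R$ vanishings are in place, however, each branch is a short, direct application of the machinery already assembled.
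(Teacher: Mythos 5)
Your proposal is correct and follows essentially the same route as the paper's own proof: shift $\Tor_5^R(M,N)=0$ down to $\Tor_1^R(\Omega_R^2M,\Omega_R^2N)=0$, split both second syzygies via Lemma~\ref{lem:Omega2splits}, use Theorem~\ref{evodthm}(1) on the two same-type pairs to get the two dichotomies, and use the $S$--$T$ cross-term vanishing (Proposition~\ref{prop:tor1}(1) and its symmetric version, which is how the paper's appeal to Theorem~\ref{evodthm} unwinds) to force freeness in the two mixed cases. Your explicit routing of the degenerate subcases into conclusions (1) and (2) is harmless and matches the paper's treatment, since those cases make a second syzygy vanish outright.
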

\begin{proof}  Using Lemma~\ref{lem:Omega2splits}, write $\Omega^2_RM = X\oplus Z$ and
$\Omega^2_RN = Y\oplus W$, where $X$ and $Y$ are $S$-modules and $Z$ and $W$ are $T$-modules.
Now 
$$\Tor^R_1(X\oplus Z,Y\oplus W)=\Tor^R_1(\Omega^2_RM,\Omega^2_RN) = \Tor^R_5(M,N) = 0.$$ Hence
\[
\Tor^R_1(X,Y) = \Tor^R_1(X,W)=\Tor^R_1(Z,Y) = \Tor^R_1(Z,W)=0\,.
\]
From Theorem~\ref{evodthm}(1), $\Tor^R_1(X,Y) =0$ and $\Tor^R_1(Z,W)=0$ imply
 \[
\begin{gathered}
X=0 \quad\text{or}\quad Y=0\,;\qquad\text{and}\\
Z=0\quad\text{or}\quad W=0\,.
\end{gathered}
\]

If $X=Z=0$ we get (1), and if $Y=W = 0$ we get (2).
There are two remaining cases:

(a) $Y=0=Z$ and $X\ne 0 \ne W$\,.

(b) $X=0 = W$ and $Y\ne0 \ne Z$\,.

\noindent Assume (a) holds.  Then $\Omega^2_RM= X$, which is an $S$-module, and $\Omega^2_R N = W$, a $T$-module.  Now Theorem~\ref{evodthm} and the equation $\Tor_1^R(X,W) = 0$ yield conclusion (3).  Similarly, case (b) leads to conclusion (4).
\end{proof}

\begin{comment}
By Lemma~\ref{lem:tor1},
%The seventh of eight formulas given by Nasseh and Sather-Wagstaff in Lemma 2.3 of \cite{NS} tells us
\[
\Tor_1^R(Z,Y) \cong \big(\Tor_1^T(k,Z)\big)^{\oplus\beta_0^SY} \oplus \left(\frac{Z}{\n Z}\right)^{\oplus\beta_1^S Y}\,.
\]
Since $\Tor^R_1(Z,Y) =0$, both summands on the right are zero.  We have $\beta^S_0Y\ne 0$, and
$\frac{Z}{\n Z}\ne0$, and hence $\Tor_1^T(k,Z)= 0$ and $\beta_1^SY=0$.  Thus $Z$ and $Y$ are free, over $T$ and $S$, respectively, and we have (4).  A symmetric argument
%, using the fifth formula in Lemma 2.3 of \cite{NS}, 
shows that case (b) leads to conclusion (3).

\end{comment}

Now on to $\Tor_6$\,. Here, as in the result above,
both $M$ and $N$ are allowed to be arbitrary $R$-modules (that is, not
necessarily annihilated by $I$ or by~$J$).
In view of Example~\ref{eg:DVRs}, however, we cannot do away with the assumption that 
$S$ and $T$ are not DVRs.

\begin{thm}\label{thm:tor6} Let $M$ and $N$ be $R$-modules. Assume
that
 neither $S$ nor $T$ is a discrete valuation domain.  If $\Tor_m^R(M,N)= 0$ for some $m\ge 6$, then
$\pd_RM\le 1$ or $\pd_RN\le 1$.  Therefore $\Tor_i^R(M,N) = 0$ for all $i\ge2$.
\end{thm}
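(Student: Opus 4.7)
The plan is to mimic and strengthen the argument for Theorem~\ref{thm:tor5}, exploiting that $m\ge 6$ gives access to the shifted Tor-index $n:=m-4\ge 2$, with $n\ge 3$ whenever $n$ is odd. First I would apply Lemma~\ref{lem:Omega2splits} to both modules, obtaining $\Omega_R^2 M = X\oplus Z$ and $\Omega_R^2 N = Y\oplus W$ with $X,Y$ finitely generated $S$-modules and $Z,W$ finitely generated $T$-modules. Shifting syzygies converts the hypothesis $\Tor_m^R(M,N)=0$ into $\Tor_n^R(\Omega_R^2 M,\Omega_R^2 N)=0$, and distributing over the direct sums yields the four simultaneous vanishings
\[
\Tor_n^R(X,Y)=\Tor_n^R(X,W)=\Tor_n^R(Z,Y)=\Tor_n^R(Z,W)=0.
\]

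Next I would apply Theorem~\ref{evodthm} to each of these four Tors to force at least one module in each pair to vanish. For the same-side pair $(X,Y)$ of $S$-modules, part~(1) handles odd $n$ unconditionally and part~(4) handles even $n$ using the hypothesis that $T$ is not a DVR; symmetrically, the $T$-module pair $(Z,W)$ uses that $S$ is not a DVR. For the mixed pair $(X,W)$, part~(2) handles even $n$ unconditionally, while part~(6) handles odd $n$ using that $n\ge 3$ and that $S$ is not a DVR; the pair $(Z,Y)$ is symmetric. The condition $m\ge 6$ is exactly what guarantees $n\ge 2$ when $n$ is even and $n\ge 3$ when $n$ is odd, so every hypothesis of Theorem~\ref{evodthm} is met and both DVR-exclusions are genuinely used.

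A short case analysis then finishes the proof. If $X$ and $Y$ were both nonzero, the vanishing of $\Tor_n^R(X,Y)$ would be contradicted, so one of them is zero. Assume $X=0$ (the case $Y=0$ is symmetric). The remaining vanishings $\Tor_n^R(Z,Y)=0$ and $\Tor_n^R(Z,W)=0$ then force either $Z=0$, in which case $\Omega_R^2 M=0$ and $\pd_R M\le 1$, or $Y=W=0$, in which case $\Omega_R^2 N=0$ and $\pd_R N\le 1$. The final sentence of the theorem is immediate, since a module of projective dimension at most $1$ kills $\Tor_i^R$ for every $i\ge 2$. The main obstacle is organizational rather than conceptual: one must carefully track the four Tor vanishings against the parity of $n$ and match each to the correct part of Theorem~\ref{evodthm}, confirming along the way (via Example~\ref{eg:DVRs}) that neither DVR exclusion can be dropped.
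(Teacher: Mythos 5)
Your proof is correct, but it is organized differently from the paper's. The paper decomposes only one of the two second syzygies: it writes $\Omega^2_RN\cong Y\oplus W$ via Lemma~\ref{lem:Omega2splits}, notes that $\pd_RN>1$ forces one summand to be nonzero, and then feeds the single vanishing $\Tor^R_4(M,Y)=0$ (or $\Tor^R_4(M,W)=0$) into Theorem~\ref{thm:tor4}, whose DVR hypothesis does the rest; the whole argument is a two-step reduction. You instead decompose \emph{both} second syzygies, shift all the way down to $\Tor^R_n$ with $n=m-4\ge 2$, and run the four resulting vanishings directly through the parity dichotomies of Theorem~\ref{evodthm} — parts (1) and (4) for the same-type pairs, parts (2) and (6) for the mixed pairs — followed by a clean case analysis. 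Your parity bookkeeping is right: $n\ge2$ covers the even cases of (4), and an odd $n\ge2$ is automatically $\ge3$, which is exactly what part (6) requires; and both DVR exclusions are genuinely consumed, consistent with Example~\ref{eg:DVRs}. What your route buys is independence from Theorem~\ref{thm:tor4} (and hence a more symmetric, self-contained argument in the style of the proof of Theorem~\ref{thm:tor5}); what the paper's route buys is brevity, since Theorem~\ref{thm:tor4} has already absorbed the casework. One small presentational point: the dimension shift $\Tor^R_m(M,N)\cong\Tor^R_{n}(\Omega^2_RM,\Omega^2_RN)$ is legitimate precisely because $n\ge1$, which you should state explicitly rather than leave implicit.
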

\begin{proof} If $m>6$ we easily reduce to the case $m=6$ by
taking syzygies of one of the modules.  Therefore we assume that
$m=6$. 
%We may assume, by symmetry, that $\n$ is not a principal
%ideal of $T$.  
Assume $\pd_RN>1$; we  show that $\pd_RM\le
1$. From Lemma~\ref{lem:Omega2splits} we have
$\Omega^2_RN\cong Y\oplus W$, where $Y$ is an $S$-module and
$W$ is a $T$-module.  Moreover, since $\pd_RN >1$, at least one of $Y$ and $W$ is non-zero.
From the equation $\Tor^R_4(M,\Omega_RN) = 0$, we get
\[
\Tor^R_4(M,Y) = 0 \quad\text{and}\quad \Tor^R_4(M,W) = 0\,.
\]

If $Y \neq 0$,  Theorem~\ref{thm:tor4} and our assumption that $T$ is not a discrete valuation ring imply  that  $\pd_RM\le 1$.  If $Y=0$, then $W\ne 0$, and the assumption that $S$ is not a DVR shows that $\pd_RM\le 1$ in this case.
\end{proof}

\begin{thm}\label{thm:tor6s} Let $M$ and $N$ be $R$-modules such that $\Tor_m^R(M,N)= 0$ for some ~$m\ge 6$. Using Lemma~\ref{lem:Omega2splits}, write 
$$
\Omega^2_RM\cong X\oplus Z \quad \text{ and } \quad \Omega^2_RN\cong Y\oplus W\,,
$$ 
where $Y$ and $X$ are $S$-modules and 
$W$ and $Z$ are $T$-modules. Assume
that at least one of the following conditions holds:
 \begin{enumerate}
 \item[\rm (1)] At least one of $X$ and  $Y$ is not free as an $S$-module; or 
\item[\rm (2)] At least one of $Z$ and $W$  is not free as a $T$-module; or
\item[\rm (3)] At least one of $\{X,Y\}$ is $0$ AND at least one of $\{Z,W\}$ is $0$; or
\item [\rm (4)]Neither $S$ nor $T$ is a discrete valuation domain.
\end{enumerate}
  Then
~$\pd_RM\le 1$ or $\pd_RN\le 1$.  Therefore $\Tor_i^R(M,N) = 0$ for all $i\ge2$.
\end{thm}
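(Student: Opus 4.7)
The plan is to mimic the proof of Theorem~\ref{thm:tor6}: first reduce to the case $m=6$ by replacing one of the modules by an $(m-6)$-th syzygy and invoking Fact~\ref{pdfin1}, which ensures that any conclusion of finite projective dimension for the shifted pair lifts back to $\pd_R M\le 1$ or $\pd_R N\le 1$ for the original pair. Thus I may assume $\Tor_6^R(M,N)=0$.

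Next, using the dimension shift $\Tor_6^R(M,N)\cong \Tor_2^R(\Omega^2_R M,\Omega^2_R N)$ together with the splittings $\Omega^2_R M = X\oplus Z$ and $\Omega^2_R N = Y\oplus W$, I extract four simultaneous vanishings
\[
\Tor_2^R(X,Y) = \Tor_2^R(X,W) = \Tor_2^R(Z,Y) = \Tor_2^R(Z,W) = 0.
\]
Applying Theorem~\ref{evodthm}(2) to the two \emph{mixed} Tors (one $S$-module, one $T$-module) yields the dichotomies ``$X=0$ or $W=0$'' and ``$Y=0$ or $Z=0$''. These divide the argument into four cases: (i)~$X=Z=0$, immediately giving $\Omega^2_R M=0$ and hence $\pd_R M\le 1$; (iv)~$Y=W=0$, giving $\pd_R N\le 1$; and the two intermediate cases (ii)~$X=Y=0$, in which $\Omega^2_R M = Z$ and $\Omega^2_R N = W$ are both $T$-modules with $\Tor_2^R(Z,W)=0$, and (iii)~$Z=W=0$, the symmetric $S$-module situation.

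For (ii) and (iii) I would invoke whichever of (1)--(4) is assumed. Under (4), the symmetric form of Theorem~\ref{evodthm}(4) applied to the pair of $T$-modules $(Z,W)$ (resp.\ the $S$-modules $(X,Y)$) forces one factor to be zero, collapsing to case (i) or (iv). Under (3), the assumed zero among $\{Z,W\}$ (resp.\ among $\{X,Y\}$) directly supplies what is needed to terminate the relevant sub-case. Under (1), case (ii) is incompatible with the hypothesis since $X=Y=0$ are trivially $S$-free; in case (iii), Theorem~\ref{evodthm}(3) together with the assumed non-freeness of some $X$ or $Y$ forces its partner to vanish. Condition (2) is handled by the mirror-image argument, ruling out case (iii) and closing case (ii) via the $T$-analogue of Theorem~\ref{evodthm}(3). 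Once $\pd_R M\le 1$ or $\pd_R N\le 1$ is in hand, the vanishing $\Tor_i^R(M,N)=0$ for all $i\ge 2$ follows immediately from the definition of projective dimension.

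The main obstacle will be the bookkeeping in the two intermediate cases: verifying that each of the four hypotheses (1)--(4) cleanly resolves \emph{both} sub-cases, with conditions (1) and (2) each excluding the ``wrong'' sub-case by a freeness argument and disposing of the other via Theorem~\ref{evodthm}(3).
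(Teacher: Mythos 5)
Your core argument for $m=6$ is correct, but it is organized rather differently from the paper's. You decompose \emph{both} second syzygies and work entirely with the four vanishings $\Tor_2^R(X,Y)=\Tor_2^R(X,W)=\Tor_2^R(Z,Y)=\Tor_2^R(Z,W)=0$: the two mixed Tors plus Theorem~\ref{evodthm}(2) give the dichotomies, and the two unmixed Tors plus Theorem~\ref{evodthm}(3) and the symmetric form of Theorem~\ref{evodthm}(4) close the intermediate cases $X=Y=0$ and $Z=W=0$; everything runs on Theorem~\ref{evodthm} alone, and all four hypotheses are treated uniformly. The paper instead quotes Theorem~\ref{thm:tor6} for hypothesis (4), handles (1) and (2) by pairing an original module with a single summand of the other's second syzygy (e.g.\ $\Tor_4^R(M,Y)=0$ with $Y$ not $S$-free) and invoking Theorem~\ref{thm:tor4}, and reserves the case analysis for hypothesis (3). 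Your version is more self-contained; the paper's buys something concrete, namely that Theorems~\ref{thm:tor4} and~\ref{thm:tor6} apply for all $m\ge 4$ (resp.\ $m\ge 6$), so for hypotheses (1), (2), (4) no reduction to $m=6$ is actually needed.

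That last point is where your write-up is exposed. The reduction ``replace $M$ by $\Omega_R^{m-6}M$'' is the one genuinely delicate step: hypotheses (1)--(3) are assertions about the specific Dress--Kr\"amer pieces of $\Omega^2_RM$ and $\Omega^2_RN$, and after the shift the relevant second syzygy becomes $\Omega^{m-4}_RM$, whose pieces are not $X$ and $Z$; Fact~\ref{pdfin1} lets the \emph{conclusion} lift back, but nothing guarantees the \emph{hypothesis} descends to the shifted pair. Because your whole argument lives at the even homological degree $2$, you need this reduction in every case, whereas the paper needs it only for hypothesis (3). And for (3) the issue is real, not cosmetic: if $S$ and $T$ are both DVRs, take $M=S$, $N=T$; then $\Omega^2_RS\cong S$ and $\Omega^2_RT\cong T$, so $Y=0=Z$ and hypothesis (3) holds, while $\Tor_7^R(S,T)=0$ and both modules have infinite projective dimension. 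So under (3) alone the statement requires $m$ even (or a justified reduction), a point the paper's proof also glosses over; your proof should at least isolate and address this step rather than fold it into Fact~\ref{pdfin1}.
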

\begin{proof}If (4) holds, we quote Theorem~\ref{thm:tor6}.  For the other cases, 
we may assume, by taking syzygies, that
$m=6$. 
%We may assume, by symmetry, that $\n$ is not a principal
%ideal of $T$.  
We then have 
$$
\Tor^R_4(M,\Omega^2_RN) = 0 = \Tor^R_4(\Omega_R^2M,N)\,,
$$
 and hence
\begin{equation}\label{eq:4}
\aligned
\Tor^R_4(M,Y)& = 0  \quad\text{and}\quad \Tor^R_4(M,W) = 0\,;\\
 \Tor^R_4(X,N)& = 0  \quad\text{and}\quad \Tor^R_4(Z,N) = 0\,.
 \endaligned
\end{equation}
If $Y$ is not $S$-free,  the first equation above and Theorem~\ref{thm:tor4} show that $\pd_RM\le1$.
Symmetric arguments give the desired conclusion under either assumption (1) or (2).  The remaining case to consider is (3).

If $X=0=Z$, then $\Omega^2_RM= 0$, and hence $\pd_RM\le 1$.  Similarly, if $Y=0=W $, then
$\pd_RN\le 1$.  The remaining possibilities, under assumption (3), are (a) $X=0=W$ and (b) $Y=0=Z$.  By 
symmetry, we need to consider only possibility (a) $X=0=W$.  We have, from the first equation in \eqref{eq:4}, 
$$ 
\Tor_2^R(X\oplus Z,Y) =  \Tor_2^R(\Omega^2_RM,Y) = 0\,, \quad\text{and hence}\quad
\Tor^R_2(Z,Y) = 0\,.
$$ 
 By Theorem~\ref{evodthm}(2), either $Y=0$, in which case
$\Omega^2_RN=0$; or $Z=0$, in which case $\Omega^2_RM=0$.  Thus either $\pd_RN\le1$ or
$\pd_RM\le 1$, as desired.
\end{proof}

\begin{comment}
Therefore we
assume from now on 
, if any one of $\{Y,X,W, Z\}$ is  not free as an $S$- or $T$-module, then $\pd M\le 1$ or $\pd N\le 1$. Thus non-freeness of any of 
$\{X,Z,Y,W\}$, the first two conditions of the theorem, do imply the result.

Therefore assume  $X=S^{\oplus a}$, $Y=S^{\oplus b}$, $Z=T^{\oplus c}$, and $ W=T^{\oplus d}$, for some nonnegative integers $a,b,c,d$.  Then  
 $$0=\Tor^R_4(M,Y) 
= \Tor^R_2(\Omega^2_RM,Y)= \Tor^R_2(X \oplus Z,Y)$$ and $0=\Tor^R_2(X\oplus Z,W) $ imply
$$ \Tor^R_2(X,W) = 0=\Tor^R_2(Z,Y).$$
%\begin{equation*}
%\aligned\Tor^R_2(X,Y)& = 0,  \quad\text{and}\quad \Tor^R_2(X,W) = 0,\\
 %\Tor^R_2(Z,Y)& = 0,  \quad\text{and}\quad \Tor^R_2(Z,W) = 0 .\endaligned
% \end{equation*}
 By Theorem ~\ref{evodthm}(2),  this %concerning $\Tor_2^R$ for one $S$- and one $T$-module, $\Tor^R_2(X,W) = 0$
  implies $X=0$ or $W = 0$ AND  %similarly 
 %$ \Tor^R_2(Z,Y) = 0$ implies 
 $Z=0$ or $Y=0$. The possible cases for these both to happen are 
\begin{equation*}
\aligned &(i) ~X=0,  Z = 0,\quad &(ii) ~X=0,  Y = 0,\\
&(iii) ~W=0,  Z = 0,\quad\text{ or }&(iv)  ~Y = 0, W=0 .
\endaligned
 \end{equation*}
 In cases (i) or (iv), we have that $\Omega^2_RM=0$ or $\Omega^2_RN=0$. Thus, in these cases, $\pd M\le 1$ or $\pd N\le 1$, and so the theorem is proved. 

We need consider  only cases (ii) and (iii).

 Now assume condition 3, that is, ``One of $\{X,Y\}$ is $0$ AND one of $\{Z,W\}$ is $0$".  In case (ii)  $X=0,  Y = 0$ with one of $\{Z,W\}$ is $0$, 
 we get  that $X=0,  Z = 0$ or $Y = 0, W=0$. Thus $\pd_RM\le 1$ or $\pd_RN\le 1$. Similarly in case (iii) with the third condition,
 $\pd_RM\le 1$ or $\pd_RN\le 1$. 
 
 Finally, when neither $S$ nor $T$  is a discrete valuation domain, we quote
 
\end{comment}

\bigskip

%By combining Lemma~\ref{evodlem} with Theorem~\ref{thm:tor2}, and using Lemma~\ref{lem:Omega2splits}, we
% recover the following slight generalization of a result due to Nasseh and Sather-Wagstaff:

%\begin{remark}\label{rem:recover} $\phantom{x}$ \cite[Theorem 1.1 (b)]{NS} Let $M$ and $N$ be $R$-modules, with

 %$\Tor_m^R(M,N) = 0$ for some even $m>5$ and also for some odd $m\ge 5$.  Then either $M$ or $N$ has finite projective dimension (and hence projective dimension at most one).
%\end{remark}
 The next theorem is a slight
 generalization of a result due to Nasseh and Sather-Wagstaff \cite[Theorem 1.1 (b)]{NS}. 

\begin{thm}\label{evenoddtors} Let $M$ and $N$ be finitely generated modules over a fiber product $R$ with
$\Tor^R_{2i+1}(M,N) = 0= \Tor_{2j}^R(M,N),$ for some $i\geq 2$ and $j\geq 3$.  Then 
$\pd_RM\le 1$ or  $\pd_RN\le 1$.
\end{thm}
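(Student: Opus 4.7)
The plan is to apply Theorem~\ref{thm:tor6s} to the even-index hypothesis $\Tor_{2j}^R(M,N)=0$, and then use the odd-index hypothesis $\Tor_{2i+1}^R(M,N)=0$ to eliminate the one configuration in which Theorem~\ref{thm:tor6s} produces no conclusion.

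First I would invoke Lemma~\ref{lem:Omega2splits} to write $\Omega_R^2 M=X\oplus Z$ and $\Omega_R^2 N=Y\oplus W$, where $X,Y$ are $S$-modules and $Z,W$ are $T$-modules. Since $2j\geq 6$, Theorem~\ref{thm:tor6s} is available: if any one of its conditions (1)--(4) is satisfied, we immediately obtain $\pd_R M\leq 1$ or $\pd_R N\leq 1$ and we are done. The only case left to handle is the one in which all four conditions fail simultaneously, which forces $X=S^{\oplus a}$, $Y=S^{\oplus b}$, $Z=T^{\oplus c}$, $W=T^{\oplus d}$ with $a,b,c,d\geq 1$, and both $S$ and $T$ equal to DVRs.

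To knock out this remaining case I would bring in the odd-index hypothesis. Since $i\geq 2$ we have $2i-3\geq 1$, so standard dimension shifting yields
$$
0=\Tor_{2i+1}^R(M,N)=\Tor_{2i-3}^R\bigl(\Omega_R^2 M,\ \Omega_R^2 N\bigr)=\Tor_{2i-3}^R\bigl(S^{\oplus a}\oplus T^{\oplus c},\ S^{\oplus b}\oplus T^{\oplus d}\bigr).
$$
The right-hand side splits as a direct sum of four Tor groups, one of which is $\Tor_{2i-3}^R(S,S)^{\oplus ab}$. Because $ab\geq 1$, this forces $\Tor_{2i-3}^R(S,S)=0$. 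But $2i-3=2(i-2)+1$ is odd, and $S$ is a nonzero $S$-module, so Theorem~\ref{evodthm}(1) yields $S=0$, the desired contradiction.

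The main obstacle is identifying exactly which configuration Theorem~\ref{thm:tor6s} leaves untouched---the DVR pathology already exhibited in Example~\ref{eg:DVRs}---and recognizing that the appropriate $(S,S)$-summand of the shifted odd Tor, combined with Theorem~\ref{evodthm}(1), is precisely the tool needed to collapse it. Note also that no simultaneous shift of $M$ and $N$ places both Tor indices into a uniformly small range, so the two hypotheses must be applied at different syzygy levels rather than reduced to a single small case.
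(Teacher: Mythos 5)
Your overall strategy is sound and in fact somewhat cleaner than the paper's: the paper splits into the cases $2i+1<2j$ and $2i+1>2j$, invoking Theorem~\ref{thm:tor5} in the first and Theorem~\ref{thm:tor6s} in the second, whereas you apply Theorem~\ref{thm:tor6s} to the even vanishing and the shift $\Tor_{2i+1}^R(M,N)=\Tor_{2i-3}^R(\Omega_R^2M,\Omega_R^2N)$ (valid since $i\ge 2$) uniformly, with no case distinction. This essentially reproduces the paper's Case~2 argument and extends it to cover Case~1 as well.

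There is, however, a concrete slip in your description of the ``remaining case.'' The negation of condition (3) of Theorem~\ref{thm:tor6s} is not ``$X$, $Y$, $Z$, $W$ are all nonzero''; it is ``($X\ne 0$ and $Y\ne 0$) or ($Z\ne 0$ and $W\ne 0$).'' So the simultaneous failure of all four conditions gives $X=S^{\oplus a}$, $Y=S^{\oplus b}$, $Z=T^{\oplus c}$, $W=T^{\oplus d}$ with ($ab\ge 1$ or $cd\ge 1$), not with $a,b,c,d\ge 1$. Your contradiction via the summand $\Tor_{2i-3}^R(S,S)^{\oplus ab}$ therefore yields nothing in the subcase $ab=0$ and $cd\ge 1$ (for instance $X=0$, $Y=S$, $Z=W=T$, which is compatible with the failure of (1)--(4)). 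The repair is immediate: in that subcase use the summand $\Tor_{2i-3}^R(T,T)^{\oplus cd}$ and the $T$-module version of Theorem~\ref{evodthm}(1) to conclude $T=0$, a contradiction. Alternatively, argue as the paper does: from the two diagonal summands conclude ($a=0$ or $b=0$) and ($c=0$ or $d=0$), which is exactly condition (3) of Theorem~\ref{thm:tor6s}, and then quote that theorem rather than seeking a contradiction.
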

\begin{proof} We consider separately the cases where $2i+1<2j$ and $2i+1>2j$.

Case 1: First suppose $2i+1<2j$.  If $i>2$, then  
$$
\Tor_{5}^R(\Omega_R^{2i+1-5}M,N) = \Tor_{2i+1}^R(M,N) = 0\,.
$$ 
Since it suffices to show either $\pd (\Omega_R^{2i-4}M)$ or $\pd N$ is finite, we may suppose that $i=2$; that is, 
$$
\Tor_{5}^R(M,N) = 0= \Tor_{2j}^R(M,N)\,,
$$ 
where $j\ge 3$. 

By Theorem~\ref{thm:tor5}, at least one of the following four things happens:
\begin{enumerate}
\item $\pd_RM\le 1$.
\item $\pd_RN\le 1$.
\item $\Omega^2_RM$ is a free $S$-module and $\Omega^2_RN$ is a free $T$-module.
\item $\Omega^2_RM$ is a free $T$-module and $\Omega^2_RN$ is a free $S$-module.
\end{enumerate}
If either of the first two happens, we are done. If not, suppose (3) happens, so that  $\Omega^2_RM=S^{\oplus a}$  
and $\Omega^2_RN=T^{\oplus b}$.
%Since $0= \Tor_{2j}^R(M,N),$ Theorem~\ref{thm:tor6} implies the result holds if neither of  $S$ or $T$ is a DVR. 

Suppose $a\ne 0 \ne b$. Now $0= \Tor_{2j}^R(M,N)=\Tor^R_{2j-4}(S^{\oplus a}, T^{\oplus b})$ implies 
$$
0=\Tor^R_{2j-4}(S, T)=\Tor^R_{2j-5}(S, \Omega_RT)=\Tor^R_{2j-5}(S, \m)\,,
$$
the last equality from \eqref{eq:iso}.  Since both $S$ and $\m$ are non-zero $S$-modules, 
this contradicts Theorem~\ref{evodthm}(1).  
Therefore either $\Omega^2_RM=S^{\oplus a}=0$ or $\Omega^2_RN=T^{\oplus b}=0$. Thus  $\pd_RM\le 1$ or  $\pd_RN\le 1$.

A similar argument works if (4) happens.

Case 2: Assume that  $2i+1>2j$.  By taking syzygies, we can reduce to the case where $j=3$, and so $\Tor_6^R(M,N)=0$. As in Theorem~\ref{thm:tor6s}, let $\Omega^2_RM=X\oplus Z$  and $\Omega^2_RN=Y\oplus W$. We may take  each of the pieces to be free, 
so that $X= S^{\oplus a}, Y= S^{\oplus b}, Z=T^{\oplus c}, W=T^{\oplus d}.$

Now we also  have  $0=\Tor_{2i+1}^R(M,N) = \Tor_{2i-3}^R(\Omega_R^{2}M,\Omega_R^{2}N)$. 
Thus 
\begin{equation*}\aligned0=&\Tor_{2i-3}^R(S^{\oplus a}\oplus T^{\oplus c},S^{\oplus b}\oplus T^{\oplus d})\implies \\
&\Tor_{2i-3}^R(S^{\oplus a},S^{\oplus b})=0=\Tor_{2i-3}^R(T^{\oplus c}, T^{\oplus d}).
\endaligned
\end{equation*}
By Theorem~\ref{evodthm}(1), since $2i-3$ is an odd positive number, we have $a=0$ or $b=0$; AND $c=0$ or $d=0$. 
Thus condition (3) of Theorem~\ref{thm:tor6s} holds, and so 
$\pd_RM\le 1$ or
 $\pd_RN\le 1$.\end{proof}

Recall that a finitely generated module $M$ over a Noetherian ring
$A$ is  {\em torsionless} \cite{Ba} provided the canonical
biduality map $\delta_M:M\to M^{**}$ is injective.  By mapping a finitely generated free module onto $M^*$
and then dualizing, we get an embedding of $M^{**}$ into a free module.  It follows that every torsionless module over a local ring is, up to free
summands, a syzygy module. Therefore we get the following corollary of
Theorem~\ref{thm:tor6} by representing each of the two modules as a syzygy and
then shifting up two homological degrees.

\begin{cor}\label{cor:torsionless} Assume that neither $S$ nor $T$ is a DVR, and
let $M$ and $N$ be torsionless $R$-modules. If
$\Tor_4^R(M,N) = 0$, then at least one of $M$ and $N$ has projective dimension at most one.  \qed
\end{cor}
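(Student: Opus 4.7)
The plan is to follow the hint from the paper and reduce the torsionless hypothesis to a second--syzygy situation so that Theorem~\ref{thm:tor6} applies. Since $M$ is torsionless, the embedding $M \hookrightarrow M^{**} \hookrightarrow R^n$ gives a short exact sequence
\begin{equation*}
0 \longrightarrow M \longrightarrow R^n \longrightarrow M_0 \longrightarrow 0,
\end{equation*}
where $M_0 := R^n/M$. This exhibits $M$ literally as a (non--necessarily minimal) first syzygy of $M_0$. Similarly, build $N_0$ with $N$ as its first syzygy.

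The next step is a standard dimension-shift of Tor. From the first short exact sequence, tensoring with $N$ over $R$ and using $\Tor_i^R(R^n,N)=0$ for $i\ge 1$, one obtains $\Tor_i^R(M,N)\cong \Tor_{i+1}^R(M_0,N)$ for every $i\ge 1$. Applying the analogous shift with the second short exact sequence gives $\Tor_j^R(M_0,N)\cong \Tor_{j+1}^R(M_0,N_0)$ for $j\ge 1$. Combining these,
\begin{equation*}
0 \;=\; \Tor_4^R(M,N)\;\cong\;\Tor_5^R(M_0,N)\;\cong\;\Tor_6^R(M_0,N_0).
\end{equation*}

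Now invoke Theorem~\ref{thm:tor6}: since $\Tor_6^R(M_0,N_0)=0$ and neither $S$ nor $T$ is a DVR, either $\pd_R M_0\le 1$ or $\pd_R N_0\le 1$. Without loss of generality suppose $\pd_R M_0\le 1$. Then in the short exact sequence $0\to M\to R^n\to M_0\to 0$, the module $M$ is a first syzygy of a module of projective dimension at most one, hence is projective. Since $R$ is local, $M$ is free, so $\pd_R M=0\le 1$. The analogous conclusion holds for $N$ in the other case, completing the proof.

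I do not expect any real obstacle here: the entire content of the corollary is the two-step shift together with the structural fact (which the authors spell out just before the statement) that every torsionless module over a local Noetherian ring embeds as a first syzygy. The only small care needed is the observation that, because the embedding $M\hookrightarrow R^n$ need not come from a \emph{minimal} free cover of $M_0$, one should use the honest short exact sequence (not the minimal syzygy functor $\Omega_R$) when doing the shift; this is automatic once we take $M_0 = R^n/M$.
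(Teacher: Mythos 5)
Your proof is correct and follows essentially the same route as the paper: realize each torsionless module as a first syzygy via an embedding into a free module, shift the vanishing of $\Tor_4$ up two homological degrees to $\Tor_6^R(M_0,N_0)=0$, and invoke Theorem~\ref{thm:tor6}. Your explicit use of the short exact sequences $0\to M\to R^n\to M_0\to 0$ (rather than the paper's phrasing ``up to free summands, a syzygy module'') and the Schanuel-type conclusion at the end are fine, and in fact yield the slightly stronger conclusion that the relevant module is free.
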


\end{document}